\definecolor{darkred}{rgb}{0.4,0.1,0.1}
\definecolor{darkblue}{rgb}{0.1,0.1,0.4}
\numberwithin{equation}{section}
\theoremstyle{plain}
\newtheorem{theorem}{Theorem}[section]
\newtheorem{lemma}[theorem]{Lemma}
\newtheorem{proposition}[theorem]{Proposition}
\newtheorem{corollary}[theorem]{Corollary}
\theoremstyle{remark}
\newtheorem{remark}[theorem]{Remark}
\theoremstyle{definition}
\newtheorem{example}[theorem]{Example}
\DeclareMathOperator{\spann}{span}
\DeclareMathOperator{\diver}{div}
\DeclareMathOperator{\Real}{Re}
\DeclareMathOperator{\Imag}{Im}
\definecolor{darkgreen}{rgb}{0.1,0.45,0.1}
\definecolor{darkblue}{rgb}{0.1,0.1,0.4}
\definecolor{darkgrey}{rgb}{0.5,0.5,0.5}
\definecolor{darkred}{rgb}{0.6,0.0,0.0}
\newcommand\void[1]{}
\def\sa{\mathfrak a}
\def\R{\mathbb{R}}
\def\C{\mathbb{C}}
\def\N{\mathbb{N}}
\renewcommand{\div}{\mathrm{div}\,}
\newcommand{\dom}{\mathrm{dom}\,}
\def\dd{{\,\mathrm d}}
\newcounter{counter_a}
\title[Inequalities between eigenvalues of Schr\"odinger operators]{Inequalities between Neumann and Dirichlet eigenvalues of Schr\"odinger operators} 
\author[J.~Rohleder]{Jonathan Rohleder}
\address{Matematiska institutionen \\ Stockholms universitet \\
106 91 Stockholm \\
Sweden}
\email{jonathan.rohleder@math.su.se}
\begin{document}

\begin{abstract}
Given a Schr\"odinger operator with a real-valued potential on a bounded, convex domain or a bounded interval we prove inequalities between the eigenvalues corresponding to Neumann and Dirichlet boundary conditions, respectively. The obtained inequalities depend partially on monotonicity and convexity properties of the potential. The results are counterparts of classical inequalities for the Laplacian but display some distinction between the one-dimensional case and higher dimensions.
\end{abstract}

\maketitle

\section{Introduction}

Let $\Omega$ be either a bounded interval or a bounded, convex domain in $\R^d$ and let $V : \Omega \to \R$ be a bounded potential. For the Schr\"odinger operator $- \Delta + V$ on $\Omega$ we denote by 
\begin{align*}
 \mu_1 (V) < \mu_2 (V) \leq \mu_3 (V) \leq \dots
\end{align*}
the eigenvalues corresponding to Neumann boundary conditions and by
\begin{align*}
 \lambda_1 (V) < \lambda_2 (V) \leq \lambda_3 (V) \leq \dots
\end{align*}
those corresponding to Dirichlet boundary conditions, taking multiplicities into account in both cases. The aim of this paper is to establish inequalities between these eigenvalues that improve the trivial estimate $\mu_k (V) \leq \lambda_k (V)$; the latter follows directly from the variational characterizations of the eigenvalues, and the inequality can actually be seen to be strict with the help of a unique continuation principle, see, e.g.,~\cite[Theorem~3.2]{BRS18}.

The case of the Laplacian, i.e.\ $V = 0$ identically, has been studied in depth. While for $d = 1$ a simple calculation gives 
\begin{align*}
 \mu_{k + 1} (0) = \frac{k^2 \pi^2}{L (\Omega)^2} = \lambda_k (0), \qquad k \in \N,
\end{align*}
where $L (\Omega)$ denotes the length of the interval $\Omega$, in dimensions $d \geq 2$ inequalities between Neumann and Dirichlet eigenvalues of the Laplacian are a classical topic in spectral theory with a long history. To name only a few steps in the development, P\'olya proved $\mu_2 (0) < \lambda_1 (0)$ on any sufficiently regular $\Omega$~\cite{P52}, while Payne showed $\mu_{k + 2} (0) < \lambda_k (0)$ for all $k$ on any convex, smooth domain in $d = 2$~\cite{P55}. This was generalized by Levine and Weinberger who established 
\begin{align}\label{eq:LW}
 \mu_{k + d} (0) \leq \lambda_k (0), \qquad k \in \N,
\end{align}
on any convex domain, with strict inequality for sufficiently smooth $\Omega$~\cite{LW86}. For non-convex $\Omega$ the best result known is $\mu_{k + 1} (0) < \lambda_k (0)$ due to Friedlander~\cite{F91} and Filonov~\cite{F05}, see also~\cite{AM12}. The question if~\eqref{eq:LW} extends to non-convex domains remains open and is a subject of current research; see, e.g., the recent preprint~\cite{CMS19}. Also similar questions where studied for the Laplacian on the Heisenberg group~\cite{FL10,H08} and on manifolds~\cite{AL97,M91}. Of course all mentioned estimates extend to Schr\"odinger operators with {\em constant} potentials since adding a constant $V_0$ simply shifts all Neumann and Dirichlet eigenvalues by~$V_0$.

The case of a non-constant potential $V$ has not received much attention yet and shall be considered here. We look first into the case $d = 1$. If the potential is symmetric with respect to the center of the interval, w.l.o.g.\ $\Omega = (- r, r)$, and is monotonous on each half-interval we show that
\begin{align}\label{eq:monotone}
 \begin{cases}
  \mu_2 (V) < \lambda_1 (V) & \text{if}~V~\text{is non-increasing on}~(0, r), \\
  \lambda_1 (V) < \mu_2 (V) & \text{if}~V~\text{is non-decreasing on}~(0, r),
 \end{cases}
\end{align}
as long as $V$ is not constant. In particular, if $V$ is symmetric then
\begin{align*}
 \begin{cases}
  \mu_2 (V) < \lambda_1 (V) & \text{if}~V~\text{is concave},\\
  \lambda_1 (V) < \mu_2 (V) & \text{if}~V~\text{is convex}.
 \end{cases}
\end{align*}
These statements do not hold in general for higher eigenvalues or non-symmetric cases as we show with the help of examples. However, they remain valid for non-symmetric convex respectively concave potentials that are sufficiently small perturbations of a constant, also for higher eigenvalues. The proof of~\eqref{eq:monotone} is based on the Hellmann--Feynman formula for the change of the eigenvalues under a perturbation of the potential. 

In dimension $d \geq 2$ the situation is slightly different and inequalities of the form $\mu_{k + r} (V) \leq \lambda_k (V)$ may hold for some positive $r$ even if $V$ is convex. In fact under the assumption that $V$ is weakly differentiable and satisfies 
\begin{align}\label{eq:orthogonal}
 \nabla V (x) \perp F \qquad \text{for a.a.}~x \in \Omega
\end{align}
for a subspace $F$ of $\R^d$ with $\dim F = r$ we prove
\begin{align}\label{eq:VorthogonalIntro}
 \mu_{k + r} (V) \leq \lambda_k (V), \qquad k \in \N.
\end{align}
The condition~\eqref{eq:orthogonal} means that $V$ depends only on $d - r$ directions. If $V$ is non-constant then the best possible estimate in~\eqref{eq:orthogonal} is 
\begin{align*}
 \mu_{k + d - 1} (V) \leq \lambda_k (V), \qquad k \in \N,
\end{align*}
which is true if $V$ is ``one-dimensional'', i.e. depends only on one variable (up to a change of coordinates); see Example~\ref{ex:dminus1} below. 

However, convexity properties influence the eigenvalue inequalities considered here also in dimensions $d \geq 2$. If the potential $V$ is concave and non-constant we show 
\begin{align}\label{eq:concave}
 \mu_d (V) < \lambda_1 (V)
\end{align}
without requiring the condition~\eqref{eq:orthogonal}. If $\Omega$ and $V$ are both symmetric with respect to all coordinate axes and $V$ is concave then we even get
\begin{align*}
 \mu_{d + 1} (V) < \lambda_1 (V)
\end{align*}
as long as $V$ is not constant. This estimate applies for instance if $\Omega$ is a ball and $V$ is concave and radially symmetric, a case that is not covered by~\eqref{eq:orthogonal}.

The proofs of our multidimensional results are variational. They are based on a set of test functions suggested for the Laplacian in~\cite{LW86} and on techniques developed for Laplacian eigenvalues on polyhedral domains in~\cite{LR17}; cf.\ also~\cite{R20}.

\section{Preliminaries}

Let us set the stage and collect a few well-known facts on the Schr\"odinger operators on bounded domains and their spectra. Let $\Omega \subset \R^d$, $d \geq 1$, be a bounded, convex domain; for $d = 1$ this reduces to a bounded, open interval. We denote by $H^k (\Omega)$ the usual Sobolev space of order $k$ on $\Omega$, $k = 1, 2, \dots$, and by $H_0^1 (\Omega)$ the closure of $C_0^\infty (\Omega)$ in $H^1 (\Omega)$. As any convex domain has a Lipschitz boundary, there is a well-defined trace operator $u \mapsto u |_{\partial \Omega}$ defined on $H^1 (\Omega)$ that acts as the restriction to the boundary for $u$ that are continuous on $\overline{\Omega}$; the space $H_0^1 (\Omega)$ coincides with the kernel of the trace operator. Moreover, we denote by $u \mapsto \partial_\nu u |_{\partial \Omega}$ the trace of the derivative of $u$ with respect to the outer unit normal on $\partial \Omega$, defined on $H^2 (\Omega)$ in a weak sense; see, e.g.,~\cite[Lemma~4.3]{McL}. 

The Neumann and Dirichlet Laplacians on $\Omega$ are defined as
\begin{align*}
 - \Delta_{\rm N} u = - \Delta u, \quad \dom \big( - \Delta_{\rm N} \big) = \big\{ u \in H^2 (\Omega) : \partial_\nu u |_{\partial \Omega} = 0 \big\}
\end{align*}
and
\begin{align*}
 - \Delta_{\rm D} u = - \Delta u, \quad \dom \big( - \Delta_{\rm D} \big) = \big\{ u \in H^2 (\Omega) : u |_{\partial \Omega} = 0 \big\},
\end{align*}
respectively; in the case $d = 1$, where $\Omega = (a, b)$ for some $a < b$, the condition $\partial_\nu u |_{\partial \Omega} = 0$ has to be interpreted accordingly as $u' (a) = u' (b) = 0$. These Laplacians are self-adjoint operators in $L^2 (\Omega)$ with purely discrete, non-negative spectra that accumulate to $+ \infty$; for more details we refer the reader to, e.g.,~\cite{EE87}.

Throughout this paper we assume that $V : \Omega \to \R$ is a measurable, bounded function. The Schr\"odinger operators $- \Delta_{\rm N} + V$ and $- \Delta_{\rm D} + V$ are then perturbations of the respective Laplacians by a bounded, self-adjoint multiplication operator and, hence, self-adjoint on the same domains. We denote the eigenvalues of $- \Delta_{\rm N} + V$ by
\begin{align*}
 \mu_1 (V) < \mu_2 (V) \leq \mu_3 (V) \leq \dots
\end{align*}
and the eigenvalues of $- \Delta_{\rm D} + V$ by
\begin{align*}
 \lambda_1 (V) < \lambda_2 (V) \leq \lambda_3 (V) \leq \dots,
\end{align*}
counted according to multiplicities. Note that the respective lowest eigenvalue has multiplicity one with a corresponding eigenfunction that can be chosen strictly positive inside $\Omega$, see, e.g.,~\cite{G84}, and that both operators may have negative eigenvalues as soon as $V$ has a non-trivial negative part. We will use the representation of the eigenvalues in terms of the min-max principles
\begin{align}\label{eq:minMaxNeumann}
 \mu_k (V) = \min_{\substack{L \subset H^1 (\Omega) \\ \dim L = k}} \,\, \max_{u \in L \setminus \{0\}} \frac{\sa [u]}{\int_\Omega |u|^2 \dd x}, \qquad k \in \N,
\end{align}
and
\begin{align}\label{eq:minmaxDirichlet}
 \lambda_k (V) = \min_{\substack{L \subset H_{0}^1 (\Omega) \\ \dim L = k}} \,\, \max_{u \in L \setminus \{0\}} \frac{\sa [u]}{\int_\Omega |u|^2 \dd x}, \qquad k \in \N,
\end{align}
where 
\begin{align}\label{eq:a}
 \sa [u] := \int_\Omega \big( |\nabla u|^2 + V |u|^2 \big) \dd x, \qquad u \in H^1 (\Omega),
\end{align}
is the quadratic form associated with $- \Delta + V$. In the case $d = 1$ we will also make use of the fact that the eigenvalues of $- \Delta_{\rm N} + \tau V$ and $- \Delta_{\rm D} + \tau V$ are analytic functions of the parameter $\tau \in \R$ and satisfy the Hellmann--Feynman formulae
\begin{align}\label{eq:HadamardNeumann}
 \frac{\dd}{\dd \tau} \mu_k (\tau V) = \int_0^\pi V (x) \psi^2 (x)
\end{align}
and
\begin{align}\label{eq:HadamardDirichlet}
 \frac{\dd}{\dd \tau} \lambda_k (\tau V) = \int_0^\pi V (x) \phi^2 (x),
\end{align}
where $\psi \in \ker (- \Delta_{\rm N} - \mu_k (\tau V))$ and $\phi \in \ker (- \Delta_{\rm D} - \lambda_k (\tau V))$ are $L^2$-normalized eigenfunctions; these formulae can, e.g., be derived from~\cite[Chapter~VII, equation~(3.18)]{Kato}.

\section{The one-dimensional case}\label{sec:1D}

In this section we assume $d = 1$, that is, $\Omega = (a, b)$ is a bounded, open interval. We start our investigation with the following observation that treats ``small'' convex (respectively concave) potentials.

\begin{proposition}\label{prop:calculate}
Let $V \in W^{2, \infty} (a, b)$ be real-valued, convex and non-constant and let $V_0 \in \R$. Then for each $k_0 \in \N$ there exists $\tau_0 > 0$ such that
\begin{align*}
 \begin{cases}
  \lambda_k (V_0 + \tau V) < \mu_{k + 1} (V_0 + \tau V) & \text{if}~\tau \in (0, \tau_0), \\
	\mu_{k + 1} (V_0 + \tau V) < \lambda_k (V_0 + \tau V) & \text{if}~\tau \in (- \tau_0, 0)
 \end{cases}
\end{align*}
holds for all $k \leq k_0$.
\end{proposition}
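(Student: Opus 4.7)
The key observation is that at $\tau = 0$ the potential $V_0 + \tau V$ is constant, so the one-dimensional identity recalled in the introduction gives $\lambda_k(V_0) = (k\pi/L)^2 + V_0 = \mu_{k+1}(V_0)$, where $L = b - a$. Hence the real-analytic function $f_k(\tau) := \lambda_k(V_0 + \tau V) - \mu_{k+1}(V_0 + \tau V)$ satisfies $f_k(0) = 0$ for every $k$. The strategy is to show that $f_k'(0) < 0$, and then conclude both claimed inequalities by a Taylor expansion on a sufficiently small neighbourhood of $\tau = 0$; since the conclusion is required only for finitely many $k$, one simply takes $\tau_0 := \min_{1\le k\le k_0}\tau_0^{(k)}$.

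To compute $f_k'(0)$ I would apply the Hellmann--Feynman formulae \eqref{eq:HadamardNeumann}--\eqref{eq:HadamardDirichlet} at $\tau = 0$, using the explicit normalized eigenfunctions $\phi_k(x) = \sqrt{2/L}\sin(k\pi(x-a)/L)$ and $\psi_{k+1}(x) = \sqrt{2/L}\cos(k\pi(x-a)/L)$. The identity $\sin^2 - \cos^2 = -\cos(2\cdot)$ then yields
\[
 f_k'(0) = \int_a^b V(x)\bigl(\phi_k(x)^2 - \psi_{k+1}(x)^2\bigr)\dd x = -\frac{2}{L}\int_a^b V(x)\cos\!\Bigl(\tfrac{2k\pi(x-a)}{L}\Bigr)\dd x.
\]
I would then integrate by parts twice against the antiderivative $g(x) := \frac{L^2}{4k^2\pi^2}\bigl(1 - \cos(2k\pi(x-a)/L)\bigr)$, chosen so that $g(a) = g(b) = g'(a) = g'(b) = 0$ and $g''(x) = \cos(2k\pi(x-a)/L)$. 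All boundary terms vanish and one is left with
\[
 f_k'(0) = -\frac{L}{2k^2\pi^2}\int_a^b V''(x)\Bigl(1 - \cos\!\bigl(\tfrac{2k\pi(x-a)}{L}\bigr)\Bigr)\dd x.
\]
Since $V'' \ge 0$ a.e.\ by convexity and $1 - \cos \ge 0$ everywhere, this gives $f_k'(0) \le 0$.

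The main obstacle is upgrading this to the strict inequality $f_k'(0) < 0$. The integrand above is strictly positive on a set of positive Lebesgue measure exactly when $V''$ is not a.e.\ zero, so strict inequality holds whenever $V$ is not affine. I would therefore read the hypothesis ``convex and non-constant'' as tacitly meaning $V'' \not\equiv 0$; indeed, in the purely affine case a reflection $x \mapsto a + b - x$ argument together with the fact that shifts of the potential by constants merely translate all eigenvalues shows that $f_k$ is an even function of $\tau$, which directly rules out the opposite signs claimed on the two sides of $0$. Under $V'' \not\equiv 0$, the integrand is non-negative and strictly positive on a set of positive measure, which yields $f_k'(0) < 0$ and thereby completes the argument via the analytic Taylor expansion described above.
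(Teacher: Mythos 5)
Your proof is correct and follows essentially the same route as the paper: the Hellmann--Feynman formulae at $\tau = 0$ with the explicit trigonometric eigenfunctions, two integrations by parts to express $f_k'(0)$ as a negative constant times $\int V'' \sin^2$, the sign from convexity, and a reduction to finitely many $k$. The one point where you go beyond the paper concerns strictness: the paper simply asserts that $-\frac{2}{\pi}\int_0^\pi V''(x)\frac{\sin^2(kx)}{2k^2}\,\dd x$ is strictly negative ``as $V$ is convex and non-constant,'' which is not justified when $V$ is affine and non-constant, since then $V''\equiv 0$ and $f_k'(0)=0$. Your observation that the hypothesis must be read as $V''\not\equiv 0$, supported by the reflection argument showing that $f_k$ is even in $\tau$ for affine $V$ (so the two opposite-sign conclusions on either side of $0$ cannot both hold), is a correct and genuine refinement of the paper's argument rather than a gap in yours.
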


\begin{proof}
We may assume $V_0 = 0$ for simplicity. The statement is essentially a consequence of the Hellmann--Feynman formulae~\eqref{eq:HadamardNeumann} and~\eqref{eq:HadamardDirichlet} at $\tau = 0$. Indeed, assuming without loss of generality that $(a, b) = (0, \pi)$, we use the explicit expressions for the normalised Laplacian eigenfunctions and get through integration by parts
\begin{align*}
 \frac{\dd}{\dd \tau} \big( \lambda_k - \mu_{k + 1} \big) (\tau V) \Big|_{\tau = 0} & = \frac{2}{\pi} \int_0^\pi V (x) \big( \sin^2 (k x) - \cos^2 (k x) \big) \dd x \\
 & = \frac{2}{\pi} \int_0^\pi V' (x) \frac{\sin (2 k x)}{2 k} \dd x = - \frac{2}{\pi} \int_0^\pi V'' (x) \frac{\sin^2 (k x)}{2 k^2} \dd x.
\end{align*}
The latter expression is strictly negative as $V$ is convex and non-constant. Thus $\tau \mapsto \lambda_k (\tau V) - \mu_{k + 1} (\tau V)$ is strictly decreasing in a neighborhood of $\tau = 0$ and as $\lambda_k (0) - \mu_{k + 1} (0) = 0$ the assertion follows.
\end{proof}

Note that the case of negative $\tau$ in Proposition~\ref{prop:calculate} corresponds to a concave potential. Thus the statement indicates a change of the relation between $\lambda_k (V)$ and $\mu_{k + 1} (V)$ when $V$ changes from convex to concave (or vice versa). 

We will next derive a corresponding statement for the case $k = 1$ that is valid without any restrictions on the size of $V$. The core piece of its proof is the following lemma. Its statemtent is of an intuitive nature as the influence of a potential on the ground state energy should be smaller near a Dirichlet endpoint compared with a Neumann endpoint. However, we provide a rigorous proof.

\begin{lemma}\label{lem:mixed}
Let $V : (0, L) \to \R$ be measurable and bounded. Moreover, denote by $\lambda_1^{DN} (V)$ ($\lambda_1^{ND} (V)$, respectively) the lowest eigenvalue of the Schr\"odinger operator $- \frac{\text{d}^2}{\text{d} x^2} + V$ on $(0, L)$ subject to a Dirichlet condition at $0$ and a Neumann condition at $L$ (a Neumann condition at $0$ and a Dirichlet condition at $L$, respectively).
\begin{enumerate}
 \item If $V$ is non-increasing and non-constant then $\lambda_1^{DN} (V) < \lambda_1^{ND} (V)$.
 \item If $V$ is non-decreasing and non-constant then $\lambda_1^{ND} (V) < \lambda_1^{DN} (V)$.
\end{enumerate}
\end{lemma}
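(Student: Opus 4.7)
I would prove the lemma by reducing it to a strict Friedlander-type inequality on a symmetric interval and then establishing that inequality variationally.

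\emph{Reduction by reflection and doubling.} The map $x\mapsto L-x$ interchanges DN and ND while sending $V$ to $V^*(x):=V(L-x)$; since $V$ is non-decreasing iff $V^*$ is non-increasing, (ii) is equivalent to (i) applied to $V^*$. I therefore focus on (i), assuming $V$ is non-increasing and non-constant. Let $\phi$, $\psi$ denote the positive $L^2$-normalised ND and DN ground states on $(0,L)$. Extend $V$ evenly to $\tilde V(x):=V(|x|)$ on $(-L,L)$. The even extension $\tilde\phi(x):=\phi(|x|)$ is $C^1$ at $0$ (because $\phi'(0)=0$), vanishes at $\pm L$, and is a positive eigenfunction of $-\partial_x^2+\tilde V$; hence it is the first Dirichlet eigenfunction on $(-L,L)$, so $\lambda_1(\tilde V)=\lambda_1^{ND}(V)$. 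The odd extension $\tilde\psi(x):=\mathrm{sgn}(x)\,\psi(|x|)$ is $C^1$ at $0$ (because $\psi(0)=0$), satisfies $\tilde\psi'(\pm L)=0$, and has exactly one zero; by Sturmian oscillation it is the second Neumann eigenfunction, so $\mu_2(\tilde V)=\lambda_1^{DN}(V)$. Hence (i) reduces to
\[
\mu_2(\tilde V) < \lambda_1(\tilde V)
\]
on the symmetric interval, with $\tilde V$ even and non-constant.

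\emph{Perturbative input.} For $\tilde V=0$ there is equality $\mu_2(0)=\lambda_1(0)=(\pi/(2L))^2$, so the strict improvement must come from the non-constancy of $V$. For the scaling family $\tilde V_\tau:=\tau\tilde V$, Hellmann--Feynman at $\tau=0$ applied to $(1/\sqrt L)\cos(\pi x/(2L))$ and $(1/\sqrt L)\sin(\pi x/(2L))$, followed by an integration by parts as in the proof of Proposition~\ref{prop:calculate}, gives
\[
\left.\frac{\dd}{\dd\tau}(\lambda_1-\mu_2)(\tilde V_\tau)\right|_{\tau=0} = -\frac{2}{\pi}\int_0^L V'(x)\sin(\pi x/L)\,\dd x > 0,
\]
using $V'\le 0$ and $V$ non-constant. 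This establishes the strict inequality for small $\tau>0$.

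\emph{Main obstacle and proposed completion.} Propagating the strict inequality all the way to $\tau=1$ is the crux. A natural route is a Filonov-type trial function: let $\tilde\eta$ denote the odd solution on $(-L,L)$ of $-\eta''+\tilde V\eta=\lambda_1(\tilde V)\eta$ with $\eta(0)=0$, $\eta'(0)=1$. Since the first Neumann eigenfunction of $-\partial_x^2+\tilde V$ is even and positive, $\tilde\eta$ is automatically orthogonal to it and hence admissible for $\mu_2$. A short integration by parts yields
\[
\sa[\tilde\eta] = \lambda_1(\tilde V)\|\tilde\eta\|^2 + 2\,\tilde\eta(L)\,\tilde\eta'(L),
\]
and constancy of the Wronskian of $\tilde\phi$ and $\tilde\eta$ on $(0,L)$ gives $\tilde\eta(L)=-\phi(0)/\phi'(L)>0$. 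Thus $\mu_2(\tilde V)<\lambda_1(\tilde V)$ would follow from $\tilde\eta'(L)<0$. I expect the delicate step to be proving exactly this sign from the monotonicity of $V$: I would attempt it either via a Pr\"ufer-angle analysis on $(0,L)$ (showing that the Pr\"ufer angle of $\tilde\eta$ at $L$ falls in $(\pi/2,3\pi/2)$, equivalently $\lambda_1^{ND}(V)\in(\lambda_1^{DN}(V),\lambda_2^{DN}(V))$), or via a continuity argument along the scaling path $\tau$, using that the sign of $\tilde\eta'(L)$ can only change where $\lambda_1(\tilde V_\tau)$ coincides with a DN eigenvalue of $\tau V$.
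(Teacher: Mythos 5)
Your reduction of (ii) to (i) and your doubling of the mixed problems to the Dirichlet and second Neumann eigenvalues of the even extension $\tilde V$ on $(-L,L)$ are both correct, and the Hellmann--Feynman computation at $\tau=0$ does give the strict inequality for small $\tau>0$ (the integration by parts is legitimate since a monotone bounded $V$ has bounded variation). But the proof is not complete: the step you yourself flag as the crux is missing, and the two routes you sketch for it do not close the gap. The Filonov-type trial function is, in this one-dimensional setting, a tautological reformulation: $\tilde\eta'(L;E)$ is an entire function of $E$ whose zeros are exactly the DN eigenvalues and which is positive for $E\to-\infty$, so $\tilde\eta'(L)<0$ at $E=\lambda_1^{ND}(V)$ holds if and only if $\lambda_1^{ND}(V)\in\bigl(\lambda_1^{DN}(V),\lambda_2^{DN}(V)\bigr)$; the upper bound is automatic from interlacing, but the lower bound \emph{is} the assertion of Lemma~\ref{lem:mixed}~(i). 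Likewise, the proposed continuity argument along $\tau$ only shows that the inequality can fail only by first passing through a value $\tau^*\in(0,1)$ with $\lambda_1^{DN}(\tau^* V)=\lambda_1^{ND}(\tau^* V)$; ruling this out requires a quantitative statement at such a point of equality, which you do not supply.

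That missing ingredient is precisely where the monotonicity of $V$ must enter for \emph{arbitrary} $\tau$, not just $\tau=0$. The paper's proof provides it by showing that whenever $\lambda_1^{DN}(\tau V)-\lambda_1^{ND}(\tau V)\le 0$, the quotient $\psi/\phi$ of the two ground states is strictly decreasing (via the Wronskian identity~\eqref{eq:quotient}), so that $\phi^2-\psi^2$ changes sign exactly once, at some $x_0$; combining this single-crossing property with $V(x)\ge V(x_0)$ on $(0,x_0)$ and $V(x)\le V(x_0)$ on $(x_0,L)$ and with $\int_0^L(\phi^2-\psi^2)=0$ yields the strict derivative bound
\begin{align*}
\frac{\dd}{\dd\tau}\bigl(\lambda_1^{\rm DN}-\lambda_1^{\rm ND}\bigr)(\tau V) < 0
\end{align*}
at every such $\tau$, which is what lets the continuation run all the way to $\tau=1$. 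Without an argument of this type (or some other mechanism converting monotonicity of $V$ into a sign at general $\tau$), your proof establishes the lemma only for sufficiently small multiples of $V$, i.e.\ essentially the content of Proposition~\ref{prop:calculate} rather than of Lemma~\ref{lem:mixed}.
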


\begin{proof}
We prove~(i) only; the statement~(ii) follows from~(i) through reversion of the interval. We assume therefore that $V$ is non-increasing and non-constant. Let $\tau \in \R$ and let $\phi = \phi_{\rm DN}$ and $\psi = \psi_{\rm ND}$ be $L^2$-normalized eigenfunctions of the two mixed eigenvalue problems corresponding to $\lambda_1^{\rm DN} (\tau V)$ and $\lambda_1^{\rm ND} (\tau V)$, respectively; we may assume that $\psi (0) > 0$ and $\phi' (0) > 0$. Note that by standard Sturm--Liouville theory both $\phi$ and $\psi$ are positive inside $(0, L)$. Moreover, by the counterparts of~\eqref{eq:HadamardNeumann}--\eqref{eq:HadamardDirichlet} for mixed boundary conditions we have
\begin{align}\label{eq:HadamardDN}
 \frac{\dd}{\dd \tau} \lambda_1^{\rm DN} (\tau V) = \int_0^L V (x) \phi^2 (x) \dd x
\end{align}
and 
\begin{align}\label{eq:HadamardND}
 \frac{\dd}{\dd \tau} \lambda_1^{\rm ND} (\tau V) = \int_0^L V (x) \psi^2 (x) \dd x
\end{align}
for all $\tau \in \R$; recall that, although not explicitly indicated by our notation, $\phi$ and $\psi$ depend on $\tau$. 
 
Let us study the quotient $\psi / \phi$; the following argumentation is inspired by a reasoning from~\cite{L94}. The function $\psi / \phi$ defined on $(0, L)$ tends to $+ \infty$ at $0$ and is zero at $L$. We have
\begin{align*}
 \big( \lambda_1^{\rm DN} - \lambda_1^{\rm ND} \big) (\tau V) \int_0^x \phi \psi & = \int_0^x  (- \phi'') \psi + \tau \int_0^x V \phi \psi - \int_0^x \phi (- \psi'') - \tau \int_0^x V \phi \psi \\
& = \phi' (0) \psi (0) - \phi' (x) \psi (x) + \phi (x) \psi' (x)
\end{align*}
and, hence,
\begin{align}\label{eq:quotient}
 \Big( \frac{\psi}{\phi} \Big)' (x) = \frac{\psi' \phi - \psi \phi'}{\phi^2} (x) = \frac{\big( \lambda_1^{\rm DN} - \lambda_1^{\rm ND} \big) (\tau)}{\phi^2 (x)} \int_0^x \phi \psi - \frac{\phi' (0) \psi (0)}{\phi^2 (x)}
\end{align}
holds for any $x \in (0, L)$. As the integral on the right-hand side is positive for all $x \in (0, L)$ and $\phi' (0) \psi (0) > 0$ this implies that $\psi/\phi$ is strictly decreasing on $(0, L)$ whenever $\lambda_1^{\rm DN} (\tau V) - \lambda_1^{\rm ND} (\tau V) \leq 0$. Consequently, $\psi / \phi$ takes the value $1$ exactly once or, equivalently, $\phi^2 - \psi^2$ has exactly one zero $x_0$ in $(0, L)$ for such $\tau$.

Now we combine~\eqref{eq:HadamardDN} and~\eqref{eq:HadamardND} and get
\begin{align}\label{eq:theBigDeal}
 \frac{\dd}{\dd \tau} \big( \lambda_1^{\rm DN} - \lambda_1^{\rm ND} \big) (\tau V) & = \int_0^{x_0} V (\phi^2 - \psi^2) + \int_{x_0}^L V (\phi^2 - \psi^2).
\end{align}
By monotonicity of $V$ we have $V (x) \geq V (x_0)$ for $x \in (0, x_0)$ and $V (x) \leq V (x_0)$ on $(x_0, L)$. On the other hand, $\phi^2 - \psi^2$ is negative on $(0, x_0)$ and positive on $(x_0, L)$. Thus~\eqref{eq:theBigDeal} and the assumption that $V$ is non-constant lead to the estimate
\begin{align*}
 \frac{\dd}{\dd \tau} \big( \lambda_1^{\rm DN} - \lambda_1^{\rm ND} \big) (\tau V) & < V (x_0) \int_0^L (\phi^2 - \psi^2) = 0,
\end{align*}
whenever $\lambda_1^{\rm DN} (\tau V) - \lambda_1^{\rm ND} (\tau V) \leq 0$, where the last equality relies on $\phi$ and $\psi$ being $L^2$-normalized. As $\lambda_1^{\rm DN} (0) - \lambda_1^{\rm ND} (0) = 0$ there exists $\tau^* > 0$ such that $\frac{\dd}{\dd \tau} (\lambda_1^{\rm DN} - \lambda_1^{\rm ND}) (\tau V)$ is negative on $[0, \tau^*)$ and, consequently, $\lambda_1^{\rm DN} - \lambda_1^{\rm ND} (\tau V)$ is negative on $(0, \tau^*]$. But then $\frac{\dd}{\dd \tau} (\lambda_1^{\rm DN} - \lambda_1^{\rm ND}) (\tau V) |_{\tau = \tau^*} < 0$ and we can continue successively. By a compactness argument, we will arrive at the case $\tau = 1$ leading to $\lambda_1^{\rm DN} (V) - \lambda_1^{\rm ND} (V) < 0$, which is assertion~(i).
\end{proof}

In the following we consider the case $\Omega = (- r, r)$ for some $r > 0$ and a symmetric potential $V$, i.e.\ $V (- x) = V (x)$ for all $x \in (- r, r)$. Our main result in this section is the following.

\begin{theorem}\label{thm:main1}
Assume that $V : (- r, r) \to \R$ is bounded, measurable and symmetric. Then the following assertions hold.
\begin{enumerate}
 \item If $V$ is non-increasing and non-constant on $(0, r)$ then $\mu_2 (V) < \lambda_1 (V)$. 
 \item If $V$ is non-decreasing and non-constant on $(0, r)$ then $\lambda_1 (V) < \mu_2 (V)$. 
\end{enumerate}
\end{theorem}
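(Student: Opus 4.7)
My plan is to exploit the symmetry $V(-x) = V(x)$ to split the spectra on $(-r, r)$ into even and odd parts, which translate into mixed eigenvalue problems on $(0, r)$ to which Lemma~\ref{lem:mixed} applies directly.

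First, I use the fact that both Schr\"odinger operators on $(-r, r)$ commute with the reflection $x \mapsto -x$, so their eigenfunctions can be chosen either even or odd. Restricting an even Neumann eigenfunction to $(0, r)$ gives a solution with $u'(0) = 0$ (by evenness) and $u'(r) = 0$ (from the boundary condition), while restricting an odd one gives a solution with $u(0) = 0$ and $u'(r) = 0$; analogously for the Dirichlet problem. Thus, as multisets, the full Neumann spectrum on $(-r, r)$ is the union of the pure Neumann and $DN$-spectra of $V|_{(0, r)}$ on $(0, r)$, and the full Dirichlet spectrum is the union of the $ND$- and pure Dirichlet spectra. Since both ground states on $(-r, r)$ are strictly positive and therefore even, this yields
\begin{equation*}
  \lambda_1(V) = \lambda_1^{ND}\bigl(V|_{(0, r)}\bigr) \quad\text{and}\quad \mu_2(V) = \min\bigl(\lambda_2^{NN}(V|_{(0, r)}),\, \lambda_1^{DN}(V|_{(0, r)})\bigr),
\end{equation*}
where $\lambda_k^{NN}$ denotes the $k$-th pure Neumann eigenvalue on $(0, r)$ and $\lambda_1^{DN}, \lambda_1^{ND}$ are as in Lemma~\ref{lem:mixed}.

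In case~(i), the restriction $V|_{(0, r)}$ satisfies the hypothesis of Lemma~\ref{lem:mixed}(i), which yields $\lambda_1^{DN}(V|_{(0, r)}) < \lambda_1^{ND}(V|_{(0, r)})$. Combined with the bound $\mu_2(V) \le \lambda_1^{DN}(V|_{(0, r)})$ coming from the decomposition, this immediately gives $\mu_2(V) < \lambda_1(V)$. In case~(ii), Lemma~\ref{lem:mixed}(ii) provides $\lambda_1^{ND}(V|_{(0, r)}) < \lambda_1^{DN}(V|_{(0, r)})$, so it remains to verify the comparison $\lambda_1^{ND}(V|_{(0, r)}) < \lambda_2^{NN}(V|_{(0, r)})$. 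By Sturm oscillation the second Neumann eigenfunction on $(0, r)$ has a unique interior zero $x_0$, and its restriction to $(0, x_0)$ is the positive ground state of the $ND$-problem there, so $\lambda_2^{NN}(V|_{(0, r)}) = \lambda_1^{ND}(V|_{(0, x_0)})$. Extending test functions by zero across $(x_0, r)$ and invoking the min--max principle then yields $\lambda_1^{ND}(V|_{(0, r)}) \le \lambda_1^{ND}(V|_{(0, x_0)})$, with strictness following from unique continuation for the underlying ODE. Combining the two strict inequalities gives $\lambda_1(V) < \mu_2(V)$.

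The main technical obstacle I anticipate is the strict comparison $\lambda_1^{ND} < \lambda_2^{NN}$ in case~(ii); the deeper analytic work is already encapsulated in Lemma~\ref{lem:mixed}, and the remainder is essentially bookkeeping enabled by the symmetry decomposition.
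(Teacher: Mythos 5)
Your proof is correct and follows essentially the same route as the paper: reduce to the half-interval $(0,r)$ via the parity decomposition and then apply Lemma~\ref{lem:mixed}. The only divergence is in identifying $\mu_2(V)$: the paper observes that the second Neumann eigenfunction on $(-r,r)$ has exactly one zero and therefore must be odd (an even eigenfunction with a single zero would satisfy $u(0)=u'(0)=0$), so that $\mu_2(V)=\lambda_1^{\rm DN}\bigl(V;(0,r)\bigr)$ holds as an identity and both cases follow from Lemma~\ref{lem:mixed} at once; you instead keep $\mu_2(V)=\min\bigl(\lambda_2^{\rm NN},\lambda_1^{\rm DN}\bigr)$ and must supply the extra strict comparison $\lambda_1^{\rm ND}(0,r)<\lambda_2^{\rm NN}(0,r)$ in case~(ii), which your argument (Sturm oscillation, restriction to $(0,x_0)$, zero extension plus unique continuation) does establish correctly, just at the cost of one additional step the paper's parity observation renders unnecessary.
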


\begin{proof}
Let us assume that $V$ satisfies the assumptions of the theorem and is non-constant. The eigenfunction corresponding to $\lambda_1 (V)$ is then positive inside $(- r, r)$, and it is an even function due to the symmetry of $V$. Similarly, the eigenfunction corresponding to $\mu_2 (V)$ is odd with its only zero at $0$. Hence in terms of the mixed eigenvalues on $(0, r)$ we have
\begin{align*}
 \mu_2 (V) = \lambda_1^{\rm DN} \big(V; (0, r) \big) \qquad \text{and} \qquad \lambda_1 (V) = \lambda_1^{\rm ND} \big(V; (0, r) \big),
\end{align*}
where we have used that eigenfunctions corresponding to higher eigenvalues of the mixed problems have zeroes inside $(0, r)$. Thus the assertions~(i) and~(ii) follow from Lemma~\ref{lem:mixed}~(i) and~(ii), respectively.
\end{proof}

Figure~\ref{fig:1dPotentials} displays examples of potentials to which Theorem~\ref{thm:main1} can be applied yielding different eigenvalue inequalities.
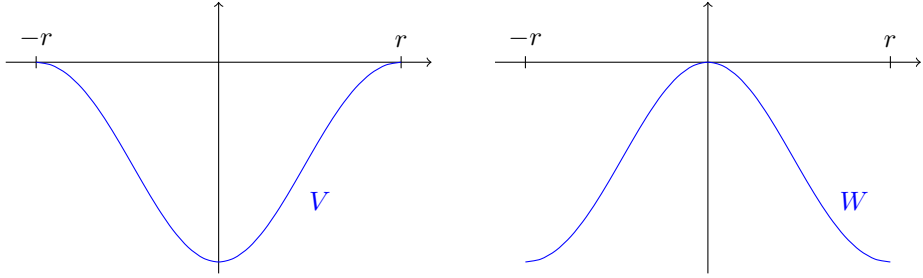
\begin{figure}[h]
    
\begin{tikzpicture}[scale=0.8]
 \draw[white] (2.0,-2.3) circle(0.00) node[left,blue]{$V$};
 \draw[->] (-3.5,0) -- (3.5,0);
 \draw[->] (0,-3.5) -- (0,1);
 \draw (-3,-0.1) -- (-3,0.1) node[above] {$-r$};
 \draw (3,-0.1) -- (3,0.1) node[above] {$r$};
 \draw[scale=3,domain=-1:1,smooth,variable=\x,blue] plot ({\x},{3*\x*\x*exp(-\x*\x)-3*exp(-1)});
\end{tikzpicture} \qquad
\begin{tikzpicture}[scale=0.8]
 \draw[white] (2.8,-2.3) circle(0.00) node[left,blue]{$W$};
 \draw[->] (-3.5,0) -- (3.5,0);
 \draw[->] (0,-3.5) -- (0,1);
 \draw (-3,-0.1) -- (-3,0.1) node[above] {$-r$};
 \draw (3,-0.1) -- (3,0.1) node[above] {$r$};
 \draw[scale=3,domain=-1:1,smooth,variable=\x,blue] plot ({\x},{- 3*\x*\x*exp(-\x*\x)});
\end{tikzpicture}

\caption{Two potentials satisfying the assumptions of Theorem~\ref{thm:main1} such that $\lambda_1 (V) < \mu_2 (V)$ and $\mu_2 (W) < \lambda_1 (W)$.}

\label{fig:1dPotentials}
 
\end{figure}

As any convex function $V$ being symmetric on $(- r, r)$ is non-decreasing on $(0, r)$, Theorem~\ref{thm:main1} has the following immediate implications for the case of a convex, respectively concave, potential. 

\begin{corollary}\label{cor:ConvexConcave}
Assume that $V : (- r, r) \to \R$ is bounded, measurable and symmetric. Then the following assertions hold.
\begin{enumerate}
 \item If $V$ is convex and non-constant then $\lambda_1 (V) < \mu_2 (V)$.
 \item If $V$ is concave and non-constant then $\mu_2 (V) < \lambda_1 (V)$.
\end{enumerate}
\end{corollary}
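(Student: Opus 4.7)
The plan is to deduce the corollary directly from Theorem~\ref{thm:main1} by showing that the convexity/concavity assumption, combined with symmetry, forces the monotonicity hypothesis on the half-interval $(0,r)$ required by that theorem.

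For part~(i), I would verify that every symmetric convex function $V$ on $(-r,r)$ is non-decreasing on $(0,r)$. Given $0 \le x < y < r$, I would write $x$ as the convex combination
\begin{align*}
 x = \frac{y-x}{2y} (-y) + \frac{y+x}{2y} y,
\end{align*}
with both coefficients in $[0,1]$. Convexity then yields
\begin{align*}
 V(x) \le \frac{y-x}{2y} V(-y) + \frac{y+x}{2y} V(y) = V(y),
\end{align*}
where the last equality uses the symmetry $V(-y)=V(y)$. Thus $V$ is non-decreasing on $(0,r)$. Next I would argue that ``non-constant'' on $(-r,r)$ transfers to ``non-constant'' on $(0,r)$: any convex function on an open interval is continuous there, so if $V$ were constant on $(0,r)$ then by continuity at $0$ and symmetry it would be constant on all of $(-r,r)$, contradicting the hypothesis. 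With the two hypotheses of Theorem~\ref{thm:main1}(ii) verified, the desired inequality $\lambda_1(V) < \mu_2(V)$ follows at once.

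Part~(ii) is entirely analogous: applying the argument above to $-V$ (which is convex, symmetric, and non-constant) shows that a symmetric concave non-constant $V$ is non-increasing and non-constant on $(0,r)$, so Theorem~\ref{thm:main1}(i) gives $\mu_2(V) < \lambda_1(V)$.

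There is no substantial obstacle; the entire content of the corollary is the elementary observation that symmetry plus convexity implies monotonicity on the half-interval, and the real work has already been done in Theorem~\ref{thm:main1}. The only minor point to be careful about is the transfer of the ``non-constant'' assumption from $(-r,r)$ to $(0,r)$, which is handled by the automatic continuity of convex functions on open intervals.
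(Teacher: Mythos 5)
Your proposal is correct and follows exactly the route the paper intends: the paper gives no separate proof but simply notes that a symmetric convex function on $(-r,r)$ is non-decreasing on $(0,r)$ and invokes Theorem~\ref{thm:main1}. Your spelled-out convex-combination argument and the transfer of the non-constancy hypothesis to the half-interval are both valid fillings of that one-line remark.
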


\begin{remark}
We would like to point out that the argumentation of Theorem~\ref{thm:main1} extends to higher eigenvalues if the potential $V$ has more symmetries. For instance,
\begin{align}\label{eq:higher}
 \mu_3 (V) = \mu_2 \big(V; (0, r) \big) \qquad \text{and} \qquad \lambda_2 (V) = \lambda_1 \big(V; (0, r) \big)
\end{align}
by the symmetry of $V$ with respect to the origin. If, in addition, $V$ is also symmetric with respect to the point $r/2$ within $(0, r)$ then one can apply Theorem~\ref{thm:main1} to the interval $(0, r)$ to obtain 
\begin{align*}
 \begin{cases}
  \lambda_2 (V) < \mu_3 (V) & \text{if $V$ is non-decreasing on}~(\frac{r}{2}, r), \\
  \mu_3 (V) < \lambda_2 (V) & \text{if $V$ is non-increasing on}~(\frac{r}{2}, r),
 \end{cases}
\end{align*}
assuming that $V$ is not constant. Analogous statements hold for higher eigenvalues. 
\end{remark}

The purpose of the next example is two-fold. On the one hand it shows that without symmetry of the potential either one or the other inequality between $\lambda_1 (V)$ and $\mu_2 (V)$ may hold. On the other hand, it shows that the statements of Theorem~\ref{thm:main1} do not carry over to higher eigenvalues.

\begin{example}\label{ex:step}
On the interval $(0, 2)$ consider the potential $V$ satisfying
\begin{align*}
 V (x) = \begin{cases}
  c & \text{for}~x \in (0, 1), \\
	0 & \text{for}~x \in (1, 2),
 \end{cases}
\end{align*}
where $c$ is a positive constant, see Figure~\ref{fig:stepPotential}. 
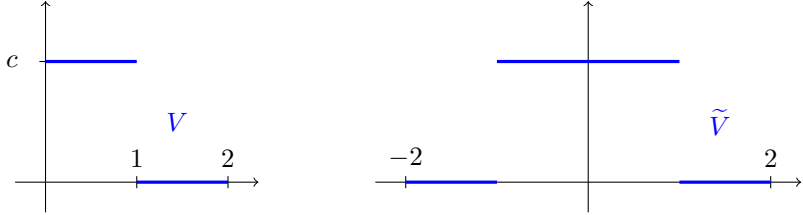
\begin{figure}[h]
    
\begin{tikzpicture}[scale=0.8]
 \draw[white] (2.5,1.0) circle(0.00) node[left,blue]{$V$};
 \draw (3,-0.1) -- (3,0.1) node[above] {$2$};
 \draw (1.5,-0.1) -- (1.5,0.1) node[above] {$1$};
 \draw (-0.1,2) -- (0.1,2);
 \draw (-0.3,2) circle(0.00) node[left]{$c$};
 \draw[->] (-0.5,0) -- (3.5,0); 
 \draw[->] (0,-0.5) -- (0,3);
 \draw[-,blue,very thick] (0,2) -- (1.5,2);
 \draw[-,blue,very thick] (1.5,0) -- (3,0);
\end{tikzpicture} \qquad \qquad
\begin{tikzpicture}[scale=0.8]
 \draw[white] (2.5,1.0) circle(0.00) node[left,blue]{$\widetilde V$};
 \draw (-3,-0.1) -- (-3,0.1) node[above] {$- 2$};
 \draw (3,-0.1) -- (3,0.1) node[above] {$2$};
 \draw (-0.1,2) -- (0.1,2);
 \draw[->] (-3.5,0) -- (3.5,0); 
 \draw[->] (0,-0.5) -- (0,3);
 \draw[-,blue,very thick] (-3,0) -- (-1.5,0);
 \draw[-,blue,very thick] (-1.5,2) -- (1.5,2);
 \draw[-,blue,very thick] (1.5,0) -- (3,0);
\end{tikzpicture}

\caption{The potentials $V$ and $\widetilde V$ in Example~\ref{ex:step}.}

\label{fig:stepPotential}
 
\end{figure}
An easy calculation yields that the Dirichlet eigenvalues $\lambda \neq c$ for the potential $V$ are given by the squares of the positive roots of the equation
\begin{align*}
 \sqrt{k^2 - c} \sin k \cos \sqrt{k^2 - c} + k \cos k \sin \sqrt{k^2 - c} = 0,
\end{align*}
and that $\lambda = c$ is an eigenvalue if and only if $\sqrt{c} + \tan \sqrt{c} = 0$. Similarly, the Neumann eigenvalues correspond to the roots of
\begin{align*}
 \sqrt{k^2 - c} \sin \sqrt{k^2 - c} \cos k + k \sin k \cos \sqrt{k^2 - c} = 0.
\end{align*}
The numerical approximate values of the lowest respective roots for several choices of $c$ are displayed in Table~\ref{tab:numerics}.
\begin{table}[h]%
\begin{tabular}{|c|c|c|c|c|c|c|}
 \hline
 $c$ & $10^{-4}$ & $10^{-2}$ & $10^{-1}$ & 1 & 10 & 100 \\
 \hline
 $\sqrt{\mu_2 (V)}$ & 1.5708 & 1.572389 & 1.5869 & 1.7438 & 3.1553 & 4.2711 \\
 \hline
 $\sqrt{\lambda_1 (V)}$ & 1.5724 & 1.572386 & 1.5866 & 1.7153 & 4.0270 & 10.3793 \\
 \hline
\end{tabular}
\vspace*{2mm}
\caption{Approximate values of the square roots of $\mu_2 (V)$ and $\lambda_1 (V)$ for the step potential $V$ in Example~\ref{ex:step}.}
\label{tab:numerics}
\end{table}
These values show that $\mu_2 (V) < \lambda_1 (V)$ holds for $c$ close to zero (which can also be seen with the technique of Proposition~\ref{prop:calculate}), and they indicate the same for large values of $c$. However, in between there exist values of $c$ for which $\lambda_1 (V) < \mu_2 (V)$ holds.

Consider, moreover, the interval $(- 2, 2)$ and the step potential $\widetilde V$ obtained from the above step potential $V$ by reflecting it symmetrically to $(- 2, 0)$; cf.\ Figure~\ref{fig:stepPotential}. Then $\widetilde V$ is non-increasing on $(0, 2)$. Applying~\eqref{eq:higher} in, e.g., the case $c = 10^{- 4}$ we get 
\begin{align*}
 \mu_3 (\widetilde V) = \mu_2 \big( V; (0, 2) \big) < \lambda_1 \big(V; (0, 2) \big) = \lambda_2 (\widetilde V),
\end{align*}
while for $c = 1$ the same reasoning implies $\lambda_2 (\widetilde V) < \mu_3 (\widetilde V)$. This shows that Theorem~\ref{thm:main1} does not extend to higher eigenvalues in general.
\end{example}

\section{Multidimensional results}\label{sec:multy}

In this section we study the multidimensional case and assume that $\Omega$ is a bounded, convex domain in $\R^d$ for some $d \geq 2$. 

First, it is an immediate consequence of the known inequalities for the Laplacian that certain nontrivial inequalities also hold for Schr\"odinger operators with potentials that are sufficiently close to constants. More specifically, if, for instance, the boundary of $\Omega$ has H\"older continuous second derivatives then the eigenvalues of the Laplacian or any Schr\"odinger operator with a constant potential $V_0$ satisfy 
\begin{align*}
 \mu_{k + d} (V_0) < \lambda_k (V_0)
\end{align*}
for all $k \in \N$ by~\cite[Theorem~2.1]{LW86}. Consequently, if $V \in L^\infty (\Omega)$ is any real-valued potential then for each $k_0 \in \N$ there exists $\tau_0 > 0$ such that 
\begin{align*}
 \mu_{k + d} (V_0 + \tau V) < \lambda_k (V_0 + \tau V)
\end{align*}
holds for all $k \leq k_0$ and all $\tau \in \R$ with $|\tau| < \tau_0$. This is true without any convexity assumption on $V$ and is therefore in contrast to the one-dimensional case. A similar observation can be obtained on any bounded, not necessarily convex Lipschitz domain: there one has at least $\mu_{k + 1} (V_0 + \tau V) < \lambda_k (V_0 + \tau V)$ for all sufficiently small $k$ and $|\tau|$ as a consequence of the inequality $\mu_{k + 1} (0) < \lambda_k (0)$ in~\cite{F05}.

For concrete domains this observation can be improved and quantified as the following example shows.

\begin{example}
Let $\Omega = (0, \pi) \times (0, \pi) \subset \R^2$. Then the Neumann and Dirichlet spectra of the Laplacian are simply given by
\begin{align*}
 \sigma (- \Delta_{\rm N}) = \big\{ m^2 + n^2 : m, n \in \N_0 \big\} \quad \text{and}  \quad \sigma (- \Delta_{\rm D}) = \{m^2 + n^2 : m, n \in \N \big\}.
\end{align*}
Counted with multiplicities, the first Neumann eigenvalues are $0, 1, 1, 2, 4, 4, 5, 5$, $8, 9, \dots$ and the first Dirichlet eigenvalues are $2, 5, 5, 8, 10, 10$, $13, 13$, $17, 17, \dots$; one can see easily that $\lambda_k (0) - \mu_{k + 2} (0) \geq 1$ holds for all $k \in \N$. Hence, if $V$ is for instance a measurable, non-negative potential and $V_0$ is constant then
\begin{align*}
 \mu_{k + 2} (V_0 + \tau V) \leq \mu_{k + 2} (V_0) + 1 \leq \lambda_k (V_0) \leq \lambda_k (V_0 + \tau V)
\end{align*}
holds for all $k \in \N$ and for all $\tau \in [0, \tau_0]$, where $\tau_0 := \|V\|_\infty^{- 1}$.
\end{example}

In general the above observation is not quantitative and therefore of limited practical use. In the following main theorem of this section we establish an inequality that is independent of the strength of the potential. To compensate for this, we assume $V$ to be constant in some directions (up to a change of coordinates). We point out that we do not make any convexity assumptions on the potential here.

\begin{theorem}\label{thm:allEigenvaluesND}
Let $\Omega \subset \R^d$, $d \geq 2$, be a bounded, convex domain and let $V \in W^{1, \infty} (\Omega)$ be real-valued. Assume that there exists an $r$-dimensional subspace $F$ of~$\R^d$ such that $\nabla V (x)$ is orthogonal to $F$ for almost all $x \in \Omega$. Then
\begin{align*}
 \mu_{k + r} (V) \leq \lambda_k (V)
\end{align*}
holds for all $k \in \N$.
\end{theorem}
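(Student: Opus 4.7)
My plan is to construct, via the min--max principle~\eqref{eq:minMaxNeumann}, a $(k+r)$-dimensional test subspace $L\subset H^1(\Omega)$ on which $\sa[\cdot]\le\lambda_k(V)\,\|\cdot\|_{L^2}^2$; this immediately yields $\mu_{k+r}(V)\le\lambda_k(V)$. Since $\Omega$ is convex, Kadlec's regularity theorem together with $V\in W^{1,\infty}(\Omega)$ places all Dirichlet eigenfunctions in $H^2(\Omega)$. Let $\phi_1,\dots,\phi_k$ be an $L^2$-orthonormal system of Dirichlet eigenfunctions for $\lambda_1(V)\le\dots\le\lambda_k(V)$, and let $e_1,\dots,e_r$ be an orthonormal basis of $F$; following the approach of~\cite{LW86}, I take
\[
L = \mathrm{span}\bigl\{\phi_1,\dots,\phi_k,\ \partial_{e_1}\phi_k,\dots,\partial_{e_r}\phi_k\bigr\}\subset H^1(\Omega).
\]
The orthogonality $\nabla V\perp F$ gives $\partial_{e_j}V=0$ a.e., so distributional differentiation of the eigenvalue equation for $\phi_k$ in direction $e_j$ yields
\[
-\Delta(\partial_{e_j}\phi_k)+V\,\partial_{e_j}\phi_k=\lambda_k(V)\,\partial_{e_j}\phi_k \quad \text{in } \Omega.
\]

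For any $u=\alpha+\beta\in L$ with $\alpha\in\mathrm{span}\{\phi_i\}\subset H_0^1(\Omega)$ and $\beta=\sum_j b_j\partial_{e_j}\phi_k$, Green's identity combined with $\alpha\in H_0^1(\Omega)$ and the displayed equation collapses the cross term to $2\lambda_k(V)\langle\alpha,\beta\rangle_{L^2}$; together with $\sa[\alpha]\le\lambda_k(V)\|\alpha\|_{L^2}^2$ (by the Dirichlet min--max for $\lambda_k$) and $\sa[\beta]=\lambda_k(V)\|\beta\|_{L^2}^2+\int_{\partial\Omega}\beta\,\partial_\nu\beta\,d\sigma$, this adds up to
\[
\sa[u]\;\le\;\lambda_k(V)\,\|u\|_{L^2}^2 + \int_{\partial\Omega}\beta\,\partial_\nu\beta\,d\sigma,
\]
reducing the proof to non-positivity of the boundary integral.

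This boundary analysis is the main technical obstacle, and proceeds via a Rellich-type identity that exploits convexity. Assuming first that $\partial\Omega$ is $C^2$, set $c=\sum_j b_j e_j\in F$ and $f=\partial_\nu\phi_k$; since $\phi_k$ vanishes on $\partial\Omega$ one has $\nabla\phi_k=f\nu$ there, and hence $\beta=(c\cdot\nu)f$. Decomposing $c=(c\cdot\nu)\nu+c_\tau$, the Weingarten relation yields $\partial_\nu\beta=(c\cdot\nu)\partial_\nu^2\phi_k+\nabla_{c_\tau}f$, while the Dirichlet condition combined with $\Delta\phi_k=0$ on $\partial\Omega$ and the Laplace--Beltrami decomposition $\Delta\phi_k=\partial_\nu^2\phi_k+H\partial_\nu\phi_k+\Delta_{\partial\Omega}\phi_k$ forces $\partial_\nu^2\phi_k=-Hf$ ($H$ the mean curvature). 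Tangential integration by parts on the closed surface $\partial\Omega$, using the identity $\mathrm{div}_{\partial\Omega}((c\cdot\nu)c_\tau)=\mathrm{II}(c_\tau,c_\tau)-(c\cdot\nu)^2H$, then produces
\[
\int_{\partial\Omega}\beta\,\partial_\nu\beta\,d\sigma \;=\; -\frac{1}{2}\int_{\partial\Omega}\bigl[(c\cdot\nu)^2 H+\mathrm{II}(c_\tau,c_\tau)\bigr]f^2\,d\sigma \;\le\; 0,
\]
since $H\ge 0$ and the second fundamental form $\mathrm{II}\ge 0$ for convex $\Omega$. The merely Lipschitz case is handled by approximating $\Omega$ from inside by smooth strictly convex domains and passing to the limit in both eigenvalues, in the spirit of~\cite{LR17}.

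Finally, I verify $\dim L = k+r$: a relation $\sum_j b_j\partial_{e_j}\phi_k\in\mathrm{span}\{\phi_i\}$ implies $(c\cdot\nu)f\equiv 0$ on $\partial\Omega$; unique continuation for Schr\"odinger operators prevents $f\equiv 0$, so $c\cdot\nu=0$ on a subset of $\partial\Omega$ on which $\nu$ spans $\R^d$ (as $\Omega$ is bounded and convex), forcing $c=0$, and orthonormality of $\{\phi_i\}$ closes the argument. The convexity hypothesis enters decisively in the boundary non-positivity identity above, while the orthogonality $\nabla V\perp F$ enters precisely to ensure that each $\partial_{e_j}\phi_k$ still solves the Schr\"odinger equation with eigenvalue $\lambda_k(V)$.
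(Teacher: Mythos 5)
Your proposal is correct in substance and reaches the stated inequality, but its technical core takes a genuinely different route from the paper's. Both arguments use the Levine--Weinberger test space spanned by $\phi_1,\dots,\phi_k$ and the directional derivatives $\partial_{e_j}\phi_k$ with $e_j\in F$, both exploit that $\partial_{e_j}V=0$ a.e.\ so that $\partial_{e_j}\phi_k$ still satisfies the eigenvalue equation with eigenvalue $\lambda_k(V)$, and both need an inner approximation step for a general convex domain. The difference lies in how $\sa[\Psi]$ is controlled. The paper first passes to convex \emph{polyhedral} domains and invokes the integral identity \eqref{eq:Grisvard} from \cite{LR17}, valid for $H^2\cap H^1_0$ functions on piecewise flat boundaries; this rewrites the second-order derivative terms with no boundary contribution whatsoever, and curvature never appears. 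You instead work on smooth strictly convex approximating domains and run the classical Rellich/Levine--Weinberger boundary analysis, reducing everything to $\int_{\partial\Omega}\beta\,\partial_\nu\beta\,\dd\sigma=-\tfrac12\int_{\partial\Omega}\bigl[(c\cdot\nu)^2H+\mathrm{II}(c_\tau,c_\tau)\bigr]f^2\,\dd\sigma\le0$; I checked this identity (including the surface divergence formula $\mathrm{div}_{\partial\Omega}((c\cdot\nu)c_\tau)=\mathrm{II}(c_\tau,c_\tau)-(c\cdot\nu)^2H$) and it is correct, with enough eigenfunction regularity available on smooth approximating domains since $V\in W^{1,\infty}(\Omega)$. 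Your route is closer to the original \cite{LW86} and makes the geometric mechanism explicit --- one sees that only nonnegativity of the second fundamental form is used --- while the paper's route sidesteps curvature and boundary regularity entirely, at the price of the a priori identity \eqref{eq:Grisvard} and a polyhedral rather than smooth approximation. Either way the conclusion follows from \eqref{eq:minMaxNeumann}.

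One small repair is needed in your dimension count. From $(c\cdot\nu)f\equiv0$ on $\partial\Omega$ you cannot first conclude $f\not\equiv0$ and then assert that $\nu$ spans $\R^d$ on the set $\{f\neq0\}$; a priori that set could lie in a single flat portion of the boundary. Argue the other way around: if $c\neq0$, then $\{x\in\partial\Omega:\,c\cdot\nu(x)\neq0\}$ has positive surface measure for any bounded convex domain, so $f=\partial_\nu\phi_k$ vanishes on a boundary set of positive measure; together with $\phi_k|_{\partial\Omega}=0$ this gives vanishing Cauchy data there, and unique continuation forces $\phi_k\equiv0$, a contradiction. This is precisely the argument of \cite[Theorem~4.1]{LR17} to which the paper defers.
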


\begin{proof}
We show the statement first for the case that $\Omega$ is a polyhedral domain; this means that the boundary of $\Omega$ is piecewise flat, consisting of $(d - 1)$-dimensional hyperplanes. Afterwards we will derive the result for general convex domains by approximation. Working with polyhedral $\Omega$ allows us to make use of the identity
\begin{align}\label{eq:Grisvard}
 \int_\Omega (\partial_{m l} u) (\partial_{m j} u) \dd x = \int_\Omega (\partial_{m m} u) (\partial_{l j} u) \dd x, \qquad u \in H_0^1 (\Omega) \cap H^2 (\Omega),
\end{align}
that is valid for all $m, l, j \in \{1, \dots, d\}$; see~\cite[Lemma~4.3.1.1--Lemma~4.3.1.3]{G85} for the two-dimensional case of a polygon and~\cite[Lemma~A.1]{LR17} for higher dimensions.\footnote{The integral identity~\eqref{eq:Grisvard} and its proof given in~\cite{LR17} do not actually require $\Omega$ to be convex (although stated for convex polyhedral domains in~\cite{LR17}). However, it is crucial for this identity that the boundary of $\Omega$ is piecewise flat; otherwise there are simple counterexamples as, e.g., the function $u (x, y) = 1 - x^2 - y^2$ on the unit disk in $\R^2$.}.  

We fix $k \in \N$ and choose an orthonormal family of real-valued eigenfunctions $u_j$ of $- \Delta_{\rm D} + V$ corresponding to the eigenvalues $\lambda_j (V)$, $j = 1, \dots, k$. Note that, as $\Omega$ is convex, $u \in H^2 (\Omega)$. Next we define the functions
\begin{align*}
 \Phi = \sum_{j = 1}^k a_j u_j \in H_0^1 (\Omega) \qquad \text{and} \qquad \Psi = b^\top \nabla u_k \in H^1 (\Omega),
\end{align*}
where $a_1, \dots, a_k$ are arbitrary complex numbers and $b = (b_1, \dots, b_d)^\top$ is any complex vector; such test functions were used in~\cite{LW86} in the case of the Laplacian. For the quadratic form $\sa$ in~\eqref{eq:a} we get
\begin{align}\label{eq:sumInForm}
 \sa [\Phi + \Psi] & = \sa [\Phi] + \sa [\Psi] + 2 \Real \int_\Omega \big(\nabla \Phi \cdot \overline{\nabla \Psi} + V \Phi \overline{\Psi} \big) \dd x.
\end{align}
It is our aim to evaluate the three summands on the right-hand side of~\eqref{eq:sumInForm}. First,
\begin{align}\label{eq:Phi}
\begin{split}
 \sa [\Phi] & = \sum_{l, j = 1}^k a_l \overline{a_j} \int_\Omega \big( (- \Delta u_l) u_j + V u_l u_j \big) \dd x \\
 & = \sum_{l, j = 1}^k \lambda_l (V) a_l \overline{a_j} \int_\Omega u_l u_j \dd x \\
 & = \sum_{j = 1}^k \lambda_j (V) |a_j|^2 \int_\Omega |u_j|^2 \dd x \leq \lambda_k (V) \int_\Omega |\Phi|^2 \dd x
\end{split}
\end{align}
due to the orthogonality of the $u_j$. Furthermore,
\begin{align}\label{eq:Psi}
\begin{split}
 \sa [\Psi] & = \sum_{m = 1}^d \int_\Omega \sum_{l = 1}^d b_l \partial_{m l} u_k \sum_{j = 1}^d \overline{b_j} \partial_{m j} u_k \dd x + \int_\Omega V b^\top \nabla u_k b^* \nabla u_k \dd x \\
 & = \sum_{m = 1}^d \int_\Omega (\partial_{m m} u_k) \sum_{l, j = 1}^d b_l \overline{b_j} \partial_{l j} u_k \dd x + \int_\Omega b^\top \nabla (V u_k)  b^* \nabla u_k \dd x \\
 & \qquad - \int_\Omega u_k b^\top \nabla V b^* \nabla u_k \dd x \\
 & = \int_\Omega \Delta u_k \diver \big( b b^* \nabla u_k \big) \dd x - \int_\Omega V u_k \diver \big( b b^* \nabla u_k \big) \dd x \\
 & \qquad - \int_\Omega u_k b^\top \nabla V b^* \nabla u_k \dd x \\
 & = - \lambda_k (V) \int_\Omega u_k \diver \big( b b^* \nabla u_k \big) \dd x - \int_\Omega u_k b^\top \nabla V b^* \nabla u_k \dd x \\
 & = \lambda_k (V) \int_\Omega \nabla u_k \cdot b b^* \nabla u_k \dd x - \int_\Omega u_k b^\top \nabla V b^* \nabla u_k \dd x \\
 & = \lambda_k (V) \int_\Omega |\Psi|^2 \dd x - \int_\Omega u_k b^\top \nabla V b^* \nabla u_k \dd x,
\end{split}
\end{align}
where we have used~\eqref{eq:Grisvard}. To treat the third summand in~\eqref{eq:sumInForm} we observe that
\begin{align*}
 - \Delta \Psi + V \Psi = b^\top \nabla (- \Delta u_k + V u_k) - u_k b^\top \nabla V = \lambda_k (V) \Psi - u_k b^\top \nabla V
\end{align*}
holds in the distributional sense and get
\begin{align}\label{eq:mixedTerm}
 \int_\Omega \big(\nabla \Phi \cdot \overline{\nabla \Psi} + V \Phi \overline{\Psi} \big) \dd x & = \lambda_k (V) \int_\Omega \Phi \overline{\Psi} \dd x - \int_\Omega \Phi u_k b^* \nabla V \dd x.
\end{align}
Plugging~\eqref{eq:Phi},~\eqref{eq:Psi} and~\eqref{eq:mixedTerm} into~\eqref{eq:sumInForm} yields
\begin{align}\label{eq:naAlso}
 \sa [\Phi + \Psi] \leq \lambda_k (V) \int_\Omega |\Phi + \Psi|^2 \dd x - 2 \Real \int_\Omega \Phi u_k b^* \nabla V \dd x - \int_\Omega u_k b^\top \nabla V b^* \nabla u_k \dd x.
\end{align}
Based on a unique continuation argument one can show as in the proof of~\cite[Theorem~4.1]{LR17} that the functions of the form $\Phi + \Psi$ given as above form a $(k + d)$-dimensional subspace of $H^1 (\Omega)$. When we restrict ourselves to such vectors $b$ such that $\Real b, \Imag b \in F$ in the definition of $\Psi$ then we get from~\eqref{eq:naAlso}
\begin{align*}
 \sa [\Phi + \Psi] \leq \lambda_k (V) \int_\Omega |\Phi + \Psi|^2 \dd x,
\end{align*}
where the functions $\Phi + \Psi$ form a $(k + r)$-dimensional subspace of $H^1 (\Omega)$. Hence the assertion of the theorem for any polyhedral domain follows by~\eqref{eq:minMaxNeumann}.

If $\Omega$ is an arbitrary convex domain then by piecewise linear interpolation of the boundary we can construct a sequence of polyhedral domains $(\Omega_n)_n$ which, due to the convexity of $\Omega$, are themselves convex and contained in $\Omega$ and approximate $\Omega$ in a sufficiently regular manner to obtain convergence of all Dirichlet and Neumann eigenvalues of the Schr\"odinger operator, cf.~\cite[Theorem~VI.10]{CH} for the case $d = 2$; the general case can be treated in the same way.
\end{proof}

\begin{remark}
\begin{enumerate}
 \item We point out that in the above theorem the best possible value of $r$ that can be reached by a non-constant potential is $d - 1$. On the other hand, for an arbitrary potential the theorem can be applied with $r = 0$ but this only yields the trivial estimate $\mu_k (V) \leq \lambda_k (V)$. Especially, in dimension $d = 1$ the reasoning of Theorem~\ref{thm:allEigenvaluesND} can only yield trivial estimates; cf.~Section~\ref{sec:1D}. For constant potentials in dimension $d \geq 2$, Theorem~\ref{thm:allEigenvaluesND} recovers the estimate~\eqref{eq:LW} due to Levine and Weinberger~\cite{LW86}.
 \item In the proof of Theorem~\ref{thm:allEigenvaluesND}, the convexity assumption on the domain $\Omega$ enters only in the $H^2$-regularity of the eigenfunctions of the Schr\"odinger operator with Dirichlet boundary conditions, which ensures that its derivatives are suitable as test functions, i.e.\ belong to $H^1 (\Omega)$. However, the question to what extent eigenvalue inequalities as those obtained here carry over to non-convex domains is open even for the case of potential zero; see, e.g., the recent discussion in~\cite[Section~1.2]{CMS19}.
\end{enumerate}
\end{remark}

We point out that in the above theorem the best possible value of $r$ that can be reached by a non-constant potential is $d - 1$. On the other hand, for an arbitrary potential the theorem can be applied with $r = 0$ but this only yields the trivial estimate $\mu_k (V) \leq \lambda_k (V)$. Especially, in dimension $d = 1$ the reasoning of Theorem~\ref{thm:allEigenvaluesND} can only yield trivial estimates; cf.~Section~\ref{sec:1D}.

In the following example Theorem~\ref{thm:allEigenvaluesND} can be applied with $r = d - 1$.

\begin{example}\label{ex:dminus1}
For $a, b \in \R \setminus \{0\}$ consider the potential
\begin{align*}
 V (x_1, \dots, x_d) = a e^{b (x_1 + \dots + x_d)}
\end{align*}
on any bounded, convex domain in $\R^d$. Then all partial derivatives of $V$ of first order equal $b V (x_1, \dots, x_d)$ and all partial derivatives of second order equal $b^2 V (x_1, \dots, x_d)$. In particular, $V$ may either be convex or concave, depending on the sign of $a$. Moreover, $\nabla V$ is contained in $\spann \{ V b (1, \dots, 1)^\top \}$ and Theorem~\ref{thm:allEigenvaluesND} yields
\begin{align*}
 \mu_{k + d - 1} (V) \leq \lambda_k (V)
\end{align*}
for all $k \in \N$.
\end{example}

Next we assume in addition that $V$ is concave, that is, the Hessian matrix $H_V$ of $V$ is negative semi-definite almost everywhere in $\Omega$. In this case one can prove a variant of Theorem~\ref{thm:allEigenvaluesND} for $k = 1$ without the gradient requirement on $V$.

\begin{theorem}\label{thm:generalLambdaOne}
If $\Omega \subset \R^d$, $d \geq 2$, is a bounded, convex domain and $V \in W^{2, \infty} (\Omega)$ is concave and real-valued then 
\begin{align*}
 \mu_d (V) \leq \lambda_1 (V)
\end{align*}
holds. If, in addition, $H_V (x)$ is negative definite on a subset of $\Omega$ with non-zero measure then $\mu_d (V) < \lambda_1 (V)$.
\end{theorem}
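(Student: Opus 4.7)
The plan is to follow the variational approach of Theorem~\ref{thm:allEigenvaluesND}, specialized to $k=1$, $\Phi=0$ and $b\in\R^d$ unrestricted, and to use concavity of $V$ in place of the orthogonality condition~\eqref{eq:orthogonal}.

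First I would reduce to the case of a polyhedral convex $\Omega$ as in Theorem~\ref{thm:allEigenvaluesND}, so that the identity~\eqref{eq:Grisvard} and the calculation~\eqref{eq:Psi} become available. Let $u_1\in H^2(\Omega)\cap H_0^1(\Omega)$ be the $L^2$-normalized, strictly positive Dirichlet ground state, and set
$$
L=\spann\{\partial_1 u_1,\dots,\partial_d u_1\}\subset H^1(\Omega).
$$
The space $L$ is $d$-dimensional: if $b\cdot\nabla u_1\equiv 0$ for some nonzero $b\in\R^d$, then $u_1$ is constant along the direction $b$ in $\Omega$, which combined with $u_1|_{\partial\Omega}=0$ forces $u_1\equiv 0$. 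For $\Psi=b\cdot\nabla u_1$ with real $b$, specializing~\eqref{eq:Psi} to $k=1$ yields
$$
\sa[\Psi] = \lambda_1(V)\int_\Omega|\Psi|^2\dd x - \int_\Omega u_1\,(b\cdot\nabla V)(b\cdot\nabla u_1)\dd x.
$$

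Next I would rewrite the correction term using $2u_1(b\cdot\nabla u_1) = b\cdot\nabla(u_1^2)$ and integrating by parts: since $u_1^2|_{\partial\Omega}=0$ and $\diver\bigl((b\cdot\nabla V)b\bigr)=b^\top H_V b$, one obtains
$$
\int_\Omega u_1\,(b\cdot\nabla V)(b\cdot\nabla u_1)\dd x = -\tfrac12\int_\Omega (b^\top H_V b)\,u_1^2\dd x,
$$
hence
$$
\sa[\Psi] = \lambda_1(V)\int_\Omega|\Psi|^2\dd x + \tfrac12\int_\Omega(b^\top H_V b)\,u_1^2\dd x.
$$
Concavity of $V$ gives $b^\top H_V b\leq 0$ a.e., so $\sa[\Psi]\leq\lambda_1(V)\int_\Omega|\Psi|^2\dd x$ for every $\Psi\in L$, and~\eqref{eq:minMaxNeumann} applied to the $d$-dimensional space $L$ yields $\mu_d(V)\leq\lambda_1(V)$. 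Under the additional assumption that $H_V$ is negative definite on a set of positive measure, the correction term is strictly negative for every $b\neq 0$ (since $u_1>0$ inside $\Omega$); as $L$ is finite-dimensional the maximum of the Rayleigh quotient over $L\setminus\{0\}$ is attained, hence strictly smaller than $\lambda_1(V)$, giving $\mu_d(V)<\lambda_1(V)$.

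The main obstacle is that the identity~\eqref{eq:Grisvard} genuinely fails on non-polyhedral convex domains, so the above computation is only directly available on polyhedra. To transfer the conclusion to a general convex $\Omega$ I would invoke the polyhedral exhaustion used at the end of the proof of Theorem~\ref{thm:allEigenvaluesND} and pass to the limit in the Dirichlet and Neumann eigenvalues; preservation of the strict inequality under this approximation requires tracking the quantitative gap between $\mu_d(V;\Omega_n)$ and $\lambda_1(V;\Omega_n)$, which in turn follows from the continuous dependence of the Dirichlet ground state on the approximating domain.
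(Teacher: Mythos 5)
Your proposal matches the paper's proof essentially verbatim: the same test functions $\Psi = b \cdot \nabla u_1$, the same specialization of~\eqref{eq:Psi} to $k = 1$, and the same integration by parts converting the correction term into $\tfrac12 \int_\Omega b^\top H_V b \, u_1^2 \dd x \leq 0$, with strictness from $u_1 > 0$ when $H_V$ is negative definite on a set of positive measure. If anything, you are more explicit than the paper about the polyhedral approximation needed to justify~\eqref{eq:Psi} on a general convex domain and about carrying the strict inequality through the limit, points the paper passes over by simply asserting that the calculation is valid.
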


\begin{proof}
Let $k = 1$ and define $\Psi$ as in the proof of Theorem~\ref{thm:allEigenvaluesND} with arbitrary $b \in \C^d$. Then the calculation~\eqref{eq:Psi} is valid and can be rewritten as
\begin{align}\label{eq:erstmal}
 \sa [\Psi] & = \lambda_1 (V) \int_\Omega |\Psi|^2 \dd x - \int_\Omega \nabla V \cdot u_1 b b^* \nabla u_1 \dd x.
\end{align}
Moreover,
\begin{align}\label{eq:undDann}
\begin{split}
 \int_\Omega \nabla V \cdot u_1 b b^* \nabla u_1 \dd x & = \frac{1}{2} \int_\Omega \nabla V \cdot b b^* \nabla \big( u_1^2 \big) \dd x = - \frac{1}{2} \int_\Omega \div \big(b b^* \nabla V \big) u_1^2 \dd x \\
 & = - \frac{1}{2} \int_\Omega b^* H_V b u_1^2 \dd x \geq 0
\end{split}
\end{align}
as $V$ is concave; strict inequality holds if and only if $H_V$ is negative definite on a set of positive measure since $u_1^2$ is positive on $\Omega$. As the admitted functions $\Psi$ span a $d$-dimensional subspace of $H^1 (\Omega)$, the claim follows from combining~\eqref{eq:erstmal} and~\eqref{eq:undDann}.
\end{proof}

A further improvement of Theorem~\ref{thm:generalLambdaOne} can be shown if domain and potential are both symmetric. To be more specific, if $\Omega$ has $d$ axes of symmetry, without loss of generality $\Omega$ is symmetric with respect to all coordinate axes, and $V$ is symmetric with respect to all axes, i.e.\ 
\begin{align*}
 V (x_1, \dots, x_{j - 1}, x_j, x_{j + 1}, \dots, x_d) = V (x_1, \dots, x_{j - 1}, - x_j, x_{j + 1}, \dots, x_d),
\end{align*}
$j = 1, \dots, d$, the following assertion holds; it is in line with the one-dimensional Corollary~\ref{cor:ConvexConcave}~(ii).

\begin{theorem}\label{thm:multidimSymmetric}
Assume that $\Omega \subset \R^d$, $d \geq 2$, is a bounded, convex domain that is symmetric with respect to each coordinate axis and that $V \in W^{2, \infty} (\Omega)$ is concave, real-valued, and symmetric with respect to each variable. Then 
\begin{align*}
 \mu_{d + 1} (V) \leq \lambda_1 (V)
\end{align*}
holds. If, in addition, $H_V (x)$ is negative definite on a subset of $\Omega$ with non-zero measure then $\mu_{d + 1} (V) < \lambda_1 (V)$.
\end{theorem}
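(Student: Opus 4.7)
The plan is to enlarge the $d$-dimensional test space from Theorem~\ref{thm:generalLambdaOne} by one dimension using the symmetry of $\Omega$ and $V$. Since $\mu_1(V)$ and $\lambda_1(V)$ are each simple with a strictly positive eigenfunction, the Neumann ground state $\varphi_1$ and the Dirichlet ground state $u_1$ are invariant under every coordinate reflection $R_j\colon x \mapsto (x_1,\dots,-x_j,\dots,x_d)$, and hence both lie in the fully symmetric sector $S_{(+,\dots,+)}$ of $L^2(\Omega)$. Convexity of $\Omega$ gives $u_1 \in H^2(\Omega)$; differentiating the identity $u_1(R_j x) = u_1(x)$ shows that $\partial_j u_1 \in H^1(\Omega)$ is antisymmetric in $x_j$ and symmetric in every other variable, so it lies in a sector $S_{\epsilon^j}$ that differs from the fully symmetric one only in slot $j$. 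Moreover $\partial_j u_1 \not\equiv 0$, for otherwise $u_1$ would be constant on every line parallel to the $x_j$-axis, forcing $u_1 \equiv 0$ on $\Omega$ by its Dirichlet boundary condition.

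Take $L = \spann\{\varphi_1,\partial_1 u_1,\dots,\partial_d u_1\}$. The $R_j$-invariance of $\Omega$ and $V$ yields $\sa(R_j u, R_j w) = \sa(u,w)$, and the same invariance holds for the $L^2$ inner product; applied to $u,w$ lying in sectors differing in slot $j$, this forces both inner products to equal their negatives, so basis vectors from distinct sectors are both $L^2$- and $\sa$-orthogonal. Hence $\dim L = d+1$, and for every $v = c_0 \varphi_1 + \sum_{j=1}^d c_j \partial_j u_1 \in L\setminus\{0\}$ the Rayleigh quotient $\sa[v]/\|v\|^2$ is a convex combination of $\mu_1(V)$ and the quantities $q_j := \sa[\partial_j u_1]/\|\partial_j u_1\|^2$, $j=1,\dots,d$. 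The inequality $\mu_1(V) < \lambda_1(V)$ is the (always strict) Neumann--Dirichlet ground state gap. Specialising~\eqref{eq:erstmal} and~\eqref{eq:undDann} from the proof of Theorem~\ref{thm:generalLambdaOne} to $b = e_j$ gives $q_j = \lambda_1(V) + \tfrac{1}{2\|\partial_j u_1\|^2}\int_\Omega (\partial_{jj}V)\,u_1^2\,\dd x \leq \lambda_1(V)$ by concavity of $V$, with strict inequality whenever $H_V$ is negative definite on a set of positive measure (since then $\partial_{jj}V < 0$ there, while $u_1^2 > 0$ inside $\Omega$). The min--max principle~\eqref{eq:minMaxNeumann} then delivers $\mu_{d+1}(V) \leq \lambda_1(V)$, with strict inequality under the additional hypothesis.

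The reduction from arbitrary convex $\Omega$ to the polyhedral case required by~\eqref{eq:Grisvard} is handled exactly as in Theorem~\ref{thm:allEigenvaluesND}, by approximating with an inscribed sequence of convex polyhedra respecting all coordinate reflections. I expect the main obstacle to be the strict inequality: using $u_1$ itself as the fully symmetric test vector (the most natural choice) would produce a Rayleigh quotient exactly equal to $\lambda_1(V)$ on the line $\C\cdot u_1$, and thus yield only the non-strict bound. The remedy is to swap $u_1$ for $\varphi_1$, which retains full symmetry, preserves $\sa$-orthogonality with every $\partial_j u_1$ by the abstract sector argument, and contributes the strictly smaller quantity $\mu_1(V)$ to the convex combination.
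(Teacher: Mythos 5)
Your proposal is correct and follows essentially the same route as the paper: the paper's proof of Theorem~\ref{thm:multidimSymmetric} is exactly the computation \eqref{eq:erstmal}--\eqref{eq:undDann} from Theorem~\ref{thm:generalLambdaOne} for the test functions $\partial_j u_1$, combined with the parity observation that each $\partial_j u_1$ is odd in $x_j$ and hence $L^2$-orthogonal to the fully symmetric Neumann ground state, which is precisely your enlargement of the test space by $\varphi_1$. Your explicit sector bookkeeping (mutual $L^2$- and form-orthogonality, convex-combination bound on the Rayleigh quotient, and the symmetric polyhedral approximation) merely spells out the steps the paper leaves implicit in ``the claim follows.''
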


\begin{proof}
The proof is the same as for Theorem~\ref{thm:generalLambdaOne}. However, due to the symmetry assumptions on $\Omega$ and $V$, the Dirichlet eigenfunction $u_1$ corresponding to the lowest eigenvalue $\lambda_1 (V)$ is even (as it can be chosen strictly positive, see~\cite{G84}) with respect to each axis of symmetry. Thus $\partial_j u_1$ is odd with respect to the $j$-th coordinate axis and even with respect to all other axes and it follows 
\begin{align*}
 \int_\Omega v_1 \partial_j u_1 \dd x = 0, \qquad j = 1, \dots, d,
\end{align*}
for any Neumann eigenfunction $v_1$ corresponding to the lowest eigenvalue $\mu_1 (V)$ since $v_1$ is even with respect to all coordinates as well. Hence each possible test function $\Psi$ is orthogonal to $v_1$ and the claim follows.
\end{proof}

Examples of convex domains in $\R^2$ being symmetric with respect to $x$ and $y$ are displayed in Figure~\ref{fig:symmetric}. 
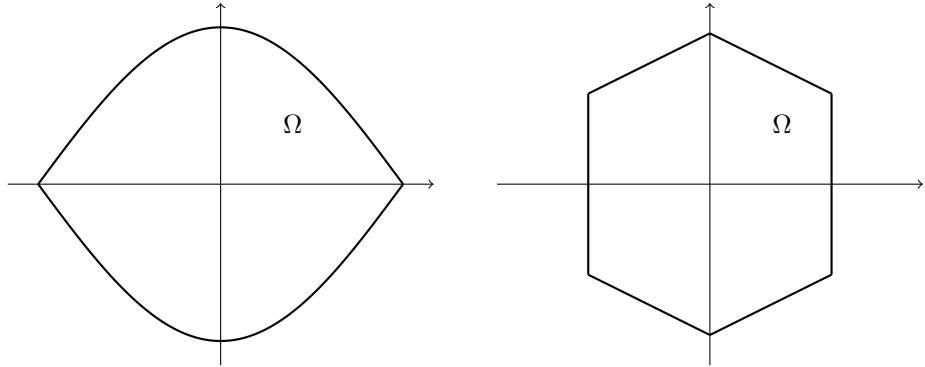
\begin{figure}[h]
    
\begin{tikzpicture}[scale=0.8]
 \draw[white] (1.5,1.0) circle(0.00) node[left,black]{$\Omega$};
 \draw[->] (-3.5,0) -- (3.5,0); 
 \draw[->] (0,-3) -- (0,3);
 \draw[scale=1,domain=0:3,smooth,variable=\x,thick] plot ({\x},{2.6*cos(30*\x)});
 \draw[scale=1,domain=-3:0,smooth,variable=\x,thick] plot ({\x},{2.6*cos(30*\x)});
 \draw[scale=1,domain=-3:0,smooth,variable=\x,thick] plot ({\x},{-2.6*cos(30*\x)});
 \draw[scale=1,domain=0:3,smooth,variable=\x,thick] plot ({\x},{-2.6*cos(30*\x)});
\end{tikzpicture} \qquad
\begin{tikzpicture}[scale=0.8]
 \draw[white] (1.5,1.0) circle(0.00) node[left,black]{$\Omega$};
 \draw[->] (-3.5,0) -- (3.5,0); 
 \draw[->] (0,-3) -- (0,3);
 \draw[thick] (-2,-1.5) -- (-2,1.5);
 \draw[thick] (2,-1.5) -- (2,1.5);
 \draw[thick] (-2,1.5) -- (0,2.5);
 \draw[thick] (2,1.5) -- (0,2.5);
 \draw[thick] (-2,-1.5) -- (0,-2.5);
 \draw[thick] (2,-1.5) -- (0,-2.5);
\end{tikzpicture}

\caption{Convex domains in $\R^2$ that are symmetric with respect to both $x$ and $y$ as required in Theorem~\ref{thm:multidimSymmetric}.}

\label{fig:symmetric}
 
\end{figure}
An example of a concave potential symmetric with respect to both $x$ and $y$ is the function
\begin{align*}
 V (x, y) = - c e^{x^2 + y^2}
\end{align*}
with a positive constant $c$. For this potential on any bounded, convex, symmetric domain Theorem~\ref{thm:multidimSymmetric} yields $\mu_3 (V) < \lambda_1 (V)$.

\section*{Acknowledgements}
The author gratefully acknowledges financial support by the grant no.\ 2018-04560 of the Swedish Research Council (VR). Moreover, the author wishes to express his gratitude to the anonymous referee for a very careful reading and for suggestions that helped to improve the presentation of the results.


\end{document}